\documentclass[twoside,a4paper,12pt]{article}
\usepackage[top=4.5cm, bottom=5cm, inner=3.5cm,outer=3.5cm]{geometry}
\usepackage[utf8]{inputenc}
\usepackage{amsthm}
\usepackage{mathtools}
\usepackage{csquotes}
\usepackage{amsmath}
\usepackage{graphicx}
\usepackage{dsfont}
\usepackage{soul}
\usepackage{amssymb}
\usepackage{multicol}
\usepackage{courier}
\usepackage{listings}
\usepackage{eurosym}
\usepackage{subfiles}
\usepackage{multicol}
\usepackage{pdfpages}
\usepackage{footnote}
\usepackage{color}
\definecolor{myblue}{rgb}{.25, .25, .9}
\definecolor{myred}{rgb}{.6, .3, .3}
\definecolor{myred2}{rgb}{.6, .1, .1}
\definecolor{mygreen}{rgb}{.25, .6, .5}
\usepackage{adjustbox}
\usepackage[colorlinks=true,
            linkcolor=myred2,
            urlcolor=myred,
            citecolor=myred]{hyperref}
\usepackage{pdfpages}
\usepackage[english]{babel}
\usepackage{array}
\usepackage{enumerate}
\usepackage{gensymb}
\usepackage[nottoc]{tocbibind}
\usepackage{lettrine}
\usepackage[normalem]{ulem}
\usepackage[font=small,labelfont=bf]{caption}
\usepackage{cancel}
\usepackage{nomencl}
\usepackage{mathrsfs}
\usepackage{changepage}
\usepackage{amsmath,stmaryrd,graphicx,float}
\numberwithin{equation}{section}

\usepackage{tikz-cd}
\usetikzlibrary{arrows}

\usepackage{subcaption} 

\usepackage{titlesec}

\newtheoremstyle{mystyle}%                % Name
  {}%                                     % Space above
  {}%                                     % Space below
  {\itshape}%                             % Body font
  {}%                                     % Indent amount
  {\bfseries}%                            % Theorem head font
  {.}%                                    % Punctuation after theorem head
  { }%                                    % Space after theorem head, ' ', or \newline
  {}%                                     % Theorem head spec (can be left empty, meaning `normal')

\theoremstyle{mystyle}

\newtheorem{theorem}{Theorem}[section]

\newtheorem{lemma}[theorem]{Lemma}
\newtheorem{proposition}[theorem]{Proposition}
\newtheorem{corollary}[theorem]{Corollary}

\newtheorem{remark}[theorem]{Remark} 
\newtheorem{example}[theorem]{Example}

\newcommand\xqed[1]{%
  \leavevmode\unskip\penalty9999 \hbox{}\nobreak\hfill
  \quad\hbox{#1}}
\newcommand\exampleEnd{\xqed{$\circ$}}

\usepackage{etoolbox}
\DeclareMathAlphabet{\mathbbold}{U}{bbold}{m}{n}

\titleformat{\section}[runin]
{\bfseries}{\llap{\thesection\hskip 9pt}}{0pt}{}
\titlespacing*{\section}
{0pt}{3.25ex plus 1ex minus .2ex}{1.5ex plus .2ex}

\titleformat{\subsection}[runin]
{\bfseries}{\llap{\thesubsection\hskip 9pt}}{0pt}{}
\titlespacing*{\subsection}
{0pt}{3.25ex plus 1ex minus .2ex}{1.5ex plus .2ex}

\titleformat{\subsubsection}[runin]
{\bfseries}{\llap{\thesubsubsection\hskip 9pt}}{0pt}{}
\titlespacing*{\subsubsection}
{0pt}{3.25ex plus 1ex minus .2ex}{1.5ex plus .2ex}

\titleformat{\paragraph}[runin]
{\bfseries}{\llap{\theparagraph\hskip 9pt}}{0pt}{}
\titlespacing*{\paragraph}
{0pt}{3.25ex plus 1ex minus .2ex}{1.5ex plus .2ex}

\usepackage[titles]{tocloft}
\usepackage{fancyhdr}
\usepackage{lastpage}
\fancypagestyle{plain}{
\fancyhf{}
\fancyhead[LE]{\thepage\quad\textbf{\nouppercase\leftmark}}
\fancyhead[RO]{\textbf{\nouppercase\rightmark}\quad\thepage}
}

\fancypagestyle{basicstyle}{%
\fancyhf{}
\fancyhead[LE]{\thepage}
\fancyhead[RO]{\thepage}
}

\usepackage{algorithm}
\usepackage{algorithmic}[1]

\usepackage{chngcntr}
\counterwithin{figure}{section}

\usepackage[backend=bibtex,
doi=false,
maxbibnames=99,
style=alphabetic,
bibencoding=ascii,
giveninits=true, 
]{biblatex}
\renewbibmacro{in:}{}

\usepackage{rotating}
\usepackage{mathtools}

\title{{\textbf{\Large On global Lyapunov functions being Morse}}}
\author{Wouter Jongeneel\footnote{The author is with the KTH Royal Institute of Technology and Digital Futures, Stockholm, Sweden. Contact: \texttt{wouterjo}\texttt{@kth.se}, \texttt{wjongeneel.nl}. This work received funding from Digital Futures through the project ``\textit{Programmability of Cells}". The author would like to thank Matthew D. Kvalheim for feedback, inspiring discussions and for generously suggesting Proposition~\ref{prop:counter}. He would also like to thank Christopher Criscitiello for bringing \cite{ref:boumal2026smooth} to his attention.}
}
\date{\small{\today}}

\begin{document}
\maketitle
\thispagestyle{empty}

\begin{abstract}
Lyapunov functions have been a cornerstone of dynamical systems theory ever since the early 1900s. Moreover and akin to Morse theory, Lyapunov functions have been the key in linking topology to dynamical systems theory. In this note we continue along these lines. Suppose that some vector field on Euclidean space has a unique equilibrium point that is globally asymptotically stable (GAS). Under these conditions, it is known that there is always a smooth Lyapunov function to certify stability. One may wonder if this function can always be chosen to be Morse. We show that if the equilibrium point is hyperbolic, this is indeed true. Moreover, we show that if this would be true in general, then the generalized $4$-dimensional smooth Poincar\'e conjecture (SPC4) must be true. Even so, we provide an example that is GAS, but does not admit a Morse Lyapunov function, closing this route of proving SPC4. 
\end{abstract}

{\footnotesize{
\noindent\textbf{\textit{Keywords}}|global asymptotic stability, Lyapunov functions, Morse theory\\
\textbf{\textit{AMS Subject Classification (2020)}}| 37B25, 55P10, 93D20
}}

% 55P10  	Homotopy equivalences in algebraic topology
% 93D20  	Asymptotic stability in control theory
% 	37B25  	Stability of topological dynamical systems

 \section{Introduction}
Starting with Massera~\cite{ref:Massera1956}, Kurzweil~\cite{ref:kurzweil1963inversion} and Wilson \cite{ref:wilson1969smoothing}, there is now a vast theory describing when continuous or smooth Lyapunov functions exist to capture the existence of a stable attractor under some dynamical system, \textit{e.g.}, see~\cite{hahn1967stability,ref:bhatia1970stability,ref:bacciotti2005liapunov,ref:fathi2019smoothing}. Ideally, however, one can say more beyond regularity of those functions. For instance, to efficiently and effectively work with parametric function spaces when searching for a Lyapunov function. However, arguably more interesting, a better structural understanding of Lyapunov functions connects to the generalized Poincar\'e conjecture (\textit{e.g.}, see \cite{ref:hirsch1965homotopy,ref:wilson1967structure} or more recently \cite{ref:JongeneelSCL26,ref:kvalheim2025global,ref:kvalheim2025differential}), as further discussed in this note.

\subsection{Setting}
Given a $C^r$-smooth dynamical system 
\begin{equation}
\label{equ:f}
    \dot{x}=f(x), \quad x\in \mathbb{R}^n,
\end{equation}
for some $r\in \mathbb{N}_{\geq 1}\cup \{\infty\}$, and suppose that some equilibrium point $x^*\in \mathbb{R}^n$ is \textit{globally asymptotically stable}\footnote{For the reader unfamiliar with stability, we briefly define GAS. Consider~\eqref{equ:f}, and let the corresponding (semi)flow be denoted by $\varphi$, \textit{i.e.}, $\frac{\mathrm{d}}{\mathrm{d}t}\varphi^t(x)|_{t=s} = f(\varphi^s(x))$ for all $(x,s)\in \mathbb{R}^n\times \mathbb{R}_{\geq 0}$. A point $x^{*}\in \mathbb{R}^n$ is an \textit{equilibrium point} of $f$ when $f(x^{*})=0$ or equivalently $\varphi^s(x^{*})=x^{*}$ $\forall s\in \mathbb{R}_{\geq 0}$. Then, the equilibrium point $x^*$ is said to be \textit{{globally asymptotically stable}} (GAS) (under $f$) when: $(i)$ $x^*$ is \textit{Lyapunov stable}, that is, for any open neighbourhood $U_{\varepsilon}\ni x^*$ there is an open set $U_{\delta}\subseteq U_{\varepsilon}$ such that $\varphi^s(U_{\delta})\subseteq U_{\varepsilon}$ $\forall s\in \mathbb{R}_{\geq 0}$; and $(ii)$ $x^*$ is \textit{globally attractive}, that is, $\lim_{s\to+\infty}\varphi^s(x_0)=x^*$ for all $x_0\in \mathbb{R}^n$.} (GAS) (\textit{e.g.}, see \cite[p. 28]{ref:bacciotti2005liapunov}). This implies that there is a smooth (\textit{i.e.}, $C^{\infty}$-smooth) function $V:\mathbb{R}^n\to \mathbb{R}_{\geq 0}$ that is $(i)$ coercive\footnote{A function $V:\mathbb{R}^n\to \mathbb{R}_{\geq 0}$ is said to be \textit{coercive} when $\{x\in \mathbb{R}^n:V(x)\leq c\}$ is compact for all $c\in \mathbb{R}_{\geq 0}$. Coercivity implies in particular that $\lim_{\|x\|\to +\infty}V(x)=+\infty$.}; $(ii)$ such that $x^*=V^{-1}(0)$; and $(iii)$ away from $x^*$, decreasing strictly under solutions of~\eqref{equ:f}, \textit{i.e.}, $x^*=\nabla V^{-1}(0)$ and  $\langle \nabla V(x),f(x) \rangle < 0$ for all $x\in \mathbb{R}^n\setminus \{x^*\}$, \textit{e.g.}, see \cite[Thm~2.4]{ref:bacciotti2005liapunov}. Functions $V$ satisfying properties $(i)$, $(ii)$ and $(iii)$ will be simply called smooth \textbf{\textit{Lyapunov functions}} (this instead of smooth, strict, proper Lyapunov functions). Without loss of generality, in what follows, we let $x^*=0$. At last, we emphasize that vector fields on $\mathbb{R}^n$ can be identified with maps from $\mathbb{R}^n$ to itself as $T\mathbb{R}^n\cong \mathbb{R}^n\times \mathbb{R}^n$.

\subsection{Structure of Lyapunov functions}
Beyond regularity, based on knowledge of the underlying dynamical system, one is sometimes able to state stronger converse results.

Here, we recall that a function $g\in C^2(\mathbb{R}^n;\mathbb{R})$ is a \textbf{\textit{Morse function}} when all its critical points (\textit{i.e.}, $x^*\in \mathbb{R}^n$ such that $\nabla g(x^*)=0$) are non-degenerate, that is, the Hessian $\nabla^2 g(x^*)$ is non-singular. Then, inspired by Morse theory (\textit{e.g.}, see \cite{ref:milnor1963morse}), one may consider the study, \textit{globally}, of Lyapunov functions that have a positive definite Hessian at $x^*$, \textit{i.e.}, $\langle u,\nabla^2V(x^*) u\rangle > 0$ for all $u\in \mathbb{R}^n\setminus \{0\}$, compactly written as $\nabla^2 V(x^*)\succ 0$. Such a Lyapunov function will be called a \textit{\textbf{Morse Lyapunov function}}. We are partially inspired to do so by the following observations. 

First, when~\eqref{equ:f} is linear, $V$ can be chosen to be a \textit{quadratic} form, \textit{e.g.}, see \cite[Thm.~18]{ref:sontag2013mathematical}. Then, Taylor series arguments motivate the use of quadratic Lyapunov functions for nonlinear systems, locally, \textit{e.g.}, see \cite[\S 9]{ref:lefschetzlasalle1961}, \cite[\S 28]{hahn1967stability} or more recent~\cite{ref:barreau2024supervised,ref:harting2026locally}.   

Beyond linear systems and quadratic Lyapunov functions, little structural knowledge is available. For instance, one may hope that \textit{polynomial} systems theory is closed in some sense. Unfortunately, counterexamples exist. For instance, it is known that despite $0\in \mathbb{R}^2$ being GAS under the planar polynomial dynamical system $\dot{x}=f_p(x)$, defined through $(x_1,x_2)=:x \mapsto f_p(x):= (-x_1+x_1x_2,-x_2)$, there is no polynomial Lyapunov function to assert this~\cite{ref:ahmadi2011globally}. A \textit{non}-polynomial Lyapunov function, however, is given by $x\mapsto V_{np}(x):=\log(1+x_1^2)+x_2^2$. As $Df_p(0)$ is Hurwitz, one does expect to find a locally quadratic Lyapunov function and indeed, $V_{np}$ is a Morse function as $\nabla^2 V_{np}(0)=2\cdot I_2\succ 0$. Similarly, one can construct vector fields on $\mathbb{R}^2$ that render the origin GAS, but provably fail to admit a \textit{convex} Lyapunov function \cite[Ex. III.3]{ref:JongeneelSchwan2024TAC}.
% we have 
% \begin{align*}
%     \nabla V_{np}(x_1,x_2) = \begin{pmatrix}
% \dfrac{2x_1}{(1+x_1^2)^2}\\
% 2x_2
%     \end{pmatrix}, \quad \nabla^2 V_{np}(x_1,x_2) = \begin{pmatrix}
%         \dfrac{2}{(1+x_1^2)^2} - \dfrac{4x_1^2}{(1+x_1^2)^3}  & 0\\
%         0 & 2
%     \end{pmatrix}
% \end{align*}
% and hence $(0,0)\in \mathbb{R}^2$ is a non-degenerate critical point.

% Note that
% \begin{align*}
%     Df_p(x) = \begin{pmatrix}
%         -1 + x_2 & x_1\\
%         0 & -1
%     \end{pmatrix},
% \end{align*}
% such that $(0,0)$ is a hyperbolic equilibrium point, meaning that locally, a quadratic Lyapunov function must exist.
Next, one may consider the real-analytic setting. However, in \cite[Prop. 5.2 (b)]{ref:bacciotti2005liapunov} one finds again a planar vector field that renders $0$ GAS but fails to admit a locally real-analytic Lyapunov function, see also \cite{ref:ahmadi2018globallyP}. Those results by themselves, however, do not obstruct the existence of a Morse Lyapunov function. Consider, for example, the function $\mathbb{R}\ni x\mapsto v(x):=x^2+\mathrm{exp}(-1/x^2)$. This function is smooth, coercive and Morse. However, this function is not real-analytic at $0$. One may remark that for both \cite{ref:bacciotti2005liapunov} and \cite{ref:ahmadi2018globallyP}, their examples are such that $0\in \mathbb{R}^2$ is a non-hyperbolic equilibrium point. Again, a lack of hyperbolicity appears to be no immediate obstruction to finding Morse Lyapunov functions, \textit{e.g.}, for the vector field $\mathbb{R}\ni x\mapsto f(x):=-x^3$ one simply picks $\mathbb{R}\ni x\mapsto V(x):=x^2$ as a valid Lyapunov function.
% Here, a particularly well-studied (\textit{e.g.}, due to \cite{ref:Fazel_18}) subclass are functions that satisfy the so-called {\textit{global P{\L} inequality}}, that is, all $V:\mathbb{R}^n\to \mathbb{R}_{\geq 0}$ that are $C^r$, with $r\in \mathbb{N}_{\geq 1}\cup \{\infty\}$, and such that for some $\mu>0$
% \begin{equation}
%     \label{equ:PL:ineq}
%     V(x) - \inf_{x\in \mathbb{R}^n}V(x) \leq \frac{1}{2\mu}\| \nabla V(x) \|_2^2,\quad \forall x\in \mathbb{R}^n.
% \end{equation}
% We call such a function simply \textbf{\textit{globally P{\L}}}. Now, one may wonder: ``\textit{If some equilibrium point $x^*$ is GAS under a dynamical system ~\eqref{equ:f}, is there always a smooth Lyapunov function---satisfying the global PL inequality \eqref{equ:PL:ineq}---to assert this?}" 
% \\\\
% Note that a lack of hyperbolicity is no immediate obstruction, \textit{e.g.}, for $f(x):=-x^3$ one simply picks $V(x):=\frac{1}{2}x^2$, which is globally P{\L} with $\mu=1$.  

% Interestingly, $V_{np}$ does not satisfy the global P{\L} inequality (\textit{e.g.}, see that $\lim_{|x|\to \infty} \nabla V_{np}(x)=0$), however, $\nabla^2 V_{np}(0)=2\cdot I_2 \succ 0$ does hold. 
Then, inspired by the above, we ask: 
\\\\
\textbf{Question:} ``\textit{If some equilibrium point $x^*\in \mathbb{R}^n$ is GAS under a dynamical system ~\eqref{equ:f}, is there always a smooth Lyapunov function---with a positive definite Hessian at $x^*$ (\textit{i.e.}, a Morse Lyapunov function)---to assert this?}" 
\\\\
To the best of our knowledge, this question is still open. In this note we will partially answer it, \textit{e.g.}, see Theorem~\ref{thm:Morse:exists:hyperbolic}, Proposition~\ref{prop:counter} and Corollary~\ref{cor:no:Morse:exists}. Before doing so, the next section provides additional motivation for studying this question via a connection with topology. 

We also mention that these \textit{\text{Morse Lyapunov functions}} were studied explicitly before. For instance, Moulay observed that if one works with a Morse Lyapunov function, then, \textit{locally} one is effectively working with a quadratic function indeed \cite[Cor. 9]{ref:moulay2011morse}. We remark that Lyapunov functions in the context of Conley's fundamental theorem of dynamical systems are also sometimes called Morse Lyapunov functions as their Lyapunov function helps encoding a Morse decomposition, \textit{e.g.}, see \cite{ref:li2012smooth}. 

\section{Implications of Lyapunov functions being Morse}
To study this question, we first follow: (i) recent topological advances regarding globally P{\L}\footnote{Within the area of optimization, a particularly well-studied (\textit{e.g.}, due to \cite{ref:Fazel_18}) subclass of Morse functions are functions that satisfy the so-called {\textit{global Polyak-{\L}ojasiewicz (P{\L}) inequality}}, that is, all $V:\mathbb{R}^n\to \mathbb{R}_{\geq 0}$ that are $C^r$, with $r\in \mathbb{N}_{\geq 1}\cup \{\infty\}$, and such that for some $\mu>0$ the inequality $V(x) - \inf_{x\in \mathbb{R}^n}V(x) \leq \frac{1}{2\mu}\| \nabla V(x) \|_2^2$ holds for all $x\in \mathbb{R}^n.$ Such a function will be simply called {\textit{globally P{\L}}}.} (and Morse) functions by Boumal, Criscitiello and Rebjock \cite{ref:boumal2026smooth} and; (ii) recent topological insights into the space of stable dynamical systems by Kvalheim and the author \cite{ref:JongeneelSchwan2024TAC,ref:JongeneelSCL26,ref:kvalheim2025differential,ref:Kvalheim2026Smale}, building upon earlier work by Wilson~\cite{ref:wilson1967structure} and Gr\"une, Wirth and Sontag \cite{ref:grune1999asymptotic}. 

We recall that the {\textit{(generalized) $4$-dimensional smooth Poincar\'e conjecture}} (also referred to as SPC4) states that every $4$-dimensional smooth homotopy sphere\footnote{For readers unfamiliar with the notion of homotopy, we clarify what a homotopy sphere is. Two continuous maps $f,f':X\to Y$, over the \textit{topological spaces} $X$ and $Y$, are \textit{homotopic}, denoted $f\simeq_h f'$, when there is a continuous map $H:X\times [0,1]\to Y$ such that $H(\cdot,0)=f$ and $H(\cdot,1)=f'$. Then, $X$ and $Y$ are {\textit{homotopy equivalent}} when there are continuous maps $f:X\to Y$ and $g:Y\to X$ such that $f\circ g\simeq_h \mathrm{id}_Y$ and $g\circ f\simeq_h\mathrm{id}_X$. Now, a topological space is a homotopy sphere when it is homotopy equivalent to a standard $n$-sphere. For reference, see~\cite[Ch. 0]{Hatcher}.} is diffeomorphic to the standard $4$-sphere $\mathbb{S}^4$. We also recall that, in general, the smooth  Poincar\'e conjecture is false, \textit{e.g.}, as is known for $n=7$ \cite{ref:Milnor7}. The topological case, however, is known to be true, as shown by Freedman~\cite[Thm. 1.6]{ref:freedman1982topology}, see also \cite{ref:behrens2021disc}. 

Now, define
\begin{equation*}
    \mathscr{V} := \big\{ V\in C^{\infty}(\mathbb{R}^n;\mathbb{R}_{\geq 0})\,|\, \exists\, \varphi\in \mathrm{Diff}(\mathbb{R}^n):V(\varphi^{-1}(y))=\inf_{y\in \mathbb{R}^n} V(y)+\|y\|_2^2\big\}. 
\end{equation*}

It turns out that most Lyapunov functions as discussed above (\textit{e.g.}, quadratic, globally P\L, Morse) are elements of $\mathscr{V}$~\cite[Thm. 3.3]{ref:boumal2026smooth}. We remark that, compared to standard Morse theory, the diffeomorphisms employed in $\mathscr{V}$ are global. We also remark that $\mathscr{V}$ connects to templates used in the context of learning stability certificates, \textit{e.g.}, see \cite{ref:rana2020euclideanizing}. Hence, understanding expressiveness of $\mathscr{V}$ is of interest. 

Next, following \cite{ref:kvalheim2025differential}, let $\mathcal{S}^r_0(\mathbb{R}^n)$ be the set of $C^r$-smooth vector fields on $\mathbb{R}^n$, for some $r\in \mathbb{N}_{\geq 1}\cup \{\infty\}$, such that $0$ is GAS. We endow $\mathcal{S}^r_0(\mathbb{R}^n)$ with the compact-open $C^r$-topology, see \cite{ref:fox1945topologies}, \cite[Ch.2]{ref:hirsch1976differential}. Kvalheim has shown that $\mathcal{S}^r_0(\mathbb{R}^n)$ is path-connected for $n\neq 4,5$ \cite[Cor. 7]{ref:kvalheim2025differential}. 

\begin{proposition}[On Lyapunov functions being quadratic, up to a change of coordinates]
\label{prop:1}
    Let $f\in \mathcal{S}^r_0(\mathbb{R}^n)$, for some $r\in \mathbb{N}_{\geq 1}\cup \{\infty\}$ and $n\in \mathbb{N}_{\geq 1}$. If the fact that $0$ is GAS under $f$ can always be certified by a Lyapunov function $V\in \mathscr{V}$, then 
    \begin{enumerate}[(i)]
        \item $\mathcal{S}^r_0(\mathbb{R}^n)$ is path-connected and;
        \item for $n=5$, the validity of the generalized $4$-dimensional smooth Poincar\'e conjecture is implied. 
    \end{enumerate}
\end{proposition}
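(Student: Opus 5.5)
The plan is to turn the hypothesis into a global change of coordinates that conjugates the Lyapunov structure to the standard quadratic one. Then every $f\in\mathcal{S}^r_0(\mathbb{R}^n)$ can be joined by an explicit path to the linear field $-x$. For (ii), the same conjugation shows that sublevel sets of $V$ are standard balls, and a known correspondence between homotopy $4$-spheres and GAS vector fields on $\mathbb{R}^5$ turns this into SPC4.

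\emph{Part (i).} Let $f\in\mathcal{S}^r_0(\mathbb{R}^n)$ and let $V\in\mathscr{V}$ be a Lyapunov function for it. Since $V(0)=0$ is the minimum, there is $\varphi\in\mathrm{Diff}(\mathbb{R}^n)$ with $V\circ\varphi^{-1}(y)=\|y\|_2^2$. Composing with a translation, we may assume $\varphi(0)=0$. Set $g:=\varphi_*f$, that is, $g(y)=D\varphi(\varphi^{-1}(y))f(\varphi^{-1}(y))$. Then $\langle 2y,g(y)\rangle<0$ for $y\neq 0$. Note that $g$ is only $C^{r-1}$ if $\varphi$ is merely $C^\infty$ and $f$ is $C^r$; I will handle this by working with $f$ directly rather than with $g$.

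\emph{Step 1: straight-line homotopy in the new coordinates.} Use $f_t:=(1-t)f+t\,\varphi^*(-y)$, where $\varphi^*(-y)$ denotes the pullback of $-y$, i.e.\ $x\mapsto -D\varphi(x)^{-1}\varphi(x)$. This field is $C^\infty$. Each $f_t$ satisfies $\langle\nabla V,f_t\rangle<0$ away from $0$, since both summands do: $\langle \nabla V,\varphi^*(-y)\rangle=-2\|y\|^2$. So $V$ is a common Lyapunov function and each $f_t$ is GAS.

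\emph{Step 2: continuity.} The map $t\mapsto f_t$ is continuous in the compact-open $C^r$ topology because it is affine in $t$.

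\emph{Step 3: reaching the linear field.} It remains to connect $\varphi^*(-y)$ to $-x$. By the Alexander trick, I isotope $\varphi$ (with $\varphi(0)=0$) to $D\varphi(0)$ via $\varphi_s(x)=\varphi(sx)/s$, with $\varphi_0=D\varphi(0)$. The fields $\varphi_s^*(-y)$ are GAS for every $s$, since each is conjugate to $-y$, and they depend continuously on $s\in[0,1]$ in the compact-open $C^r$ topology. Smoothness at $s=0$ uses Taylor's theorem, i.e.\ $\varphi(sx)/s=\int_0^1D\varphi(\tau sx)x\,d\tau$. Finally, $A^*(-y)=-x$ for linear $A$, because $-A^{-1}Ax=-x$. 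Hence every element lies in the path component of $-x$, which proves (i).

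\emph{Part (ii).} Let $\Sigma$ be a smooth homotopy $4$-sphere. By the known construction (Wilson, and the paper's references \cite{ref:JongeneelSCL26,ref:kvalheim2025differential}), removing a ball gives a contractible $5$-manifold bounded by $\Sigma$. By Freedman, $\Sigma$ is homeomorphic to $\mathbb{S}^4$, so it bounds a contractible smooth $W^5$. Attaching an open collar gives $\mathrm{int}\,W\cup(\Sigma\times[0,\infty))$. One then shows this open manifold is diffeomorphic to $\mathbb{R}^5$, so that there is a GAS field on $\mathbb{R}^5$ with a Lyapunov function whose level set is $\Sigma$. This is the cited mechanism: cone off $\Sigma\times\mathbb{R}$, or use that $\Sigma\times\mathbb{R}\cong\mathbb{S}^4\times\mathbb{R}$ by the h-cobordism in dimension $5$ as the literature indicates.

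By hypothesis there is then some $V\in\mathscr{V}$ for this field. All regular level sets of any Lyapunov function of a GAS field are diffeomorphic to one another via the flow, since the flow transports level sets. Hence $\Sigma\cong V^{-1}(c)\cong\varphi^{-1}(\{\|y\|^2=c\})\cong\mathbb{S}^4$, which is SPC4.

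\textbf{Main obstacle.} The hard part is the realization step in (ii): producing, for an arbitrary homotopy $4$-sphere $\Sigma$, a GAS field on $\mathbb{R}^5$ whose Lyapunov level set is $\Sigma$. I would import this from the cited Wilson/Kvalheim results rather than reprove it. The basic facts used are that sublevel sets are diffeomorphic to $W$ and that the complement of the equilibrium is diffeomorphic to $\Sigma\times\mathbb{R}$.
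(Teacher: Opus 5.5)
Your proposal is correct, but both parts are organized differently from the paper.

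\textbf{Part (i).} The paper proceeds in four steps:
\begin{enumerate}[(a)]
\item it makes $\varphi$ orientation preserving by a reflection;
\item it isotopes $\varphi$ to the identity using Milnor's lemma;
\item it pushes $f$ forward along that isotopy;
\item only then does it run the straight line from $\varphi_*f$ to $-y$, with $\|y\|_2^2$ as common Lyapunov function.
\end{enumerate}
You reverse the order. You run the straight line first, in the original coordinates, from $f$ to $\varphi^*(-y)$, with $V$ itself as common Lyapunov function. You then deform only the model field, through $\varphi_s(x)=\varphi(sx)/s$. Because $A^*(-y)=-x$ for every linear isomorphism $A$, you need only the first half of Milnor's argument: no orientation fix and no path in $GL^+_n(\mathbb{R})$.

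Two side remarks on (i). First, $\varphi_*f$ is in fact $C^r$, since $\varphi$ is $C^\infty$, so the regularity worry behind your detour is unfounded. Second, $\varphi(0)=0$ is automatic from $\|\varphi(0)\|_2^2=V(0)=0$. Composing with a translation would actually destroy the normal form $V\circ\varphi^{-1}=\|y\|_2^2$.

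\textbf{Part (ii).} The paper deduces this in one line from (i) plus Kvalheim's result that path-connectedness of $\mathcal{S}^r_0(\mathbb{R}^5)$ implies SPC4. You bypass (i) altogether. You realize a homotopy $4$-sphere $\Sigma$ as a Lyapunov level set for a single GAS field, apply the hypothesis to that one field, and compare level sets. This is exactly the direct route the paper mentions after its Remark (Hirsch's theorem plus the technique of Milnor's Theorem~3.1). The paper's version buys brevity; yours buys independence from (i) and from the path-connectedness result.

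The justifications you sketch for the realization step are not right, however:
\begin{enumerate}[(a)]
\item A homeomorphism $\Sigma\approx\mathbb{S}^4$ does not by itself give a smooth contractible filling; that comes from Kervaire--Milnor, $\Theta_4=0$.
\item No smooth h-cobordism theorem is available for $5$-dimensional cobordisms.
\item Knowing $\mathrm{int}\,W\cong\mathbb{R}^5$ alone does not produce a smooth proper function with a single critical point and $\Sigma$ as a level set.
\end{enumerate}
The result to import is Hirsch's Theorem~2: $\Sigma\cong h^{-1}(c)$ for some smooth $h:\mathbb{S}^5\to[0,1]$ with exactly two critical points. Delete the maximum, reparametrize $h$ to a proper function $L$ on $\mathbb{S}^5\setminus\{\mathrm{pt}\}\cong\mathbb{R}^5$, and take $f=-\nabla L$.

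You should also state the comparison step precisely. The flow does not carry level sets to level sets. What is true is that each level set $\{V=c\}$, $c>0$, of any Lyapunov function is a transversal global section of the flow on $\mathbb{R}^5\setminus\{0\}$, met exactly once by every orbit. The hitting-time map therefore gives the diffeomorphism $L^{-1}(c)\cong V^{-1}(c')$.
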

\begin{proof}

Regarding $(i)$, suppose we can indeed find a Lyapunov function $V\in \mathscr{V}$, then by its definition there is diffeomorphism $\varphi\in\mathrm{Diff}(\mathbb{R}^n)$ from $\mathbb{R}^n$ to itself such that $V(\varphi^{-1}(y))=\|y\|_2^2$ (as $\inf_{y\in \mathbb{R}^n} V(y)=0$). Thanks to the symmetry of $y\mapsto \|y\|_2^2$, one may pick $\varphi\in \mathrm{Diff}^+(\mathbb{R}^n)$ (\textit{i.e.}, select $\varphi$ as being an orientation preserving diffeomorphism), \textit{e.g.}, compose with a reflection $y_i\mapsto -y_i$ if needed. However, then, $\varphi$ is smoothly isotopic to $y\mapsto \mathrm{id}_{\mathbb{R}^n}(y):=y$ \cite[p. 34]{ref:milnor65}. Let $[0,1]\ni s\mapsto \psi_s$ describe this isotopy, from $\mathrm{id}_{\mathbb{R}^n}$, to $\varphi$. One may select the isotopy such that $\psi_s(0)=0$ $\forall s\in [0,1]$. Let $f_s:=(\psi_s)_* f$ (\textit{i.e.}, $f_s$ is the pushforward of $f$ under $\psi_s$ \cite[p. 183]{Lee2}), then as $f=f_0\in \mathcal{S}^r_0(\mathbb{R}^n)$ and $\psi_s$ is a smooth change of coordinates, $f_s\in \mathcal{S}^r_0(\mathbb{R}^n)$ $\forall s\in [0,1]$. 
%Then, consider the vector field $f_s$, defined through 
%\begin{align*}
%    \mathbb{R}^n\ni y \mapsto f_s(y):=D(\psi_s^{-1})(\psi_s(y))f(\psi_s(y)).
%\end{align*}
%Now, as $\psi_s$ is a smooth change of coordinates, one observes that $0$ is GAS under $f_s$, asserted by the Lyapunov function $V_s(y):=V(\psi_s(y))$, \textit{e.g.}, to be more explicit, as $D(\psi_s^{-1})(\psi_s(y))D\psi_s(y)=I_n$, we have $\langle \nabla V_s(y),f_s(y)\rangle = \langle \nabla V (\psi_s(y)),f(\psi_s(y))\rangle$ and thus global asymptotic stability throughout the isotopy. 
Hence, $[0,1]\ni s \mapsto f_s\in \mathcal{S}^r_0(\mathbb{R}^n)$ is a smooth path, from $f$ (equipped with Lyapunov function $V=:V_0$), to $f_1=\varphi_* f$ (equipped with a trivial quadratic Lyapunov function $y\mapsto V_1(y):=\|y\|_2^2$). Now one constructs a straight-line homotopy from $f_1$ to the canonical vector field $f_c\in \mathcal{S}^r_0(\mathbb{R}^n)$ defined through $y\mapsto f_c(y):=-y$. To construct this homotopy, observe that $-\nabla V_1(y)/2 = -y$, such that $\left\langle \nabla V_1(y), (1-\tau)f_1(y)-\tau \nabla V_1(y)/2 \right\rangle < 0$ for all $(\tau,y) \in [0,1]\times \mathbb{R}^n\setminus \{0\}$. As this can be done for any $f$, one concludes by transitivity.    

Regarding $(ii)$, if $\mathcal{S}^r_0(\mathbb{R}^5)$ is path-connected, for some $r\in \mathbb{N}_{\geq 1}\cup \{\infty\}$, then the smooth $4$-dimensional Poincar\'e conjecture is true \cite[Prop. 2]{ref:kvalheim2025differential}.
\end{proof}

\begin{remark}[Beyond $n=5$]
\upshape{
    Proposition~\ref{prop:1} $(ii)$ relies on a result by Hirsh stating the every homotopy $4$-sphere $S^4$ is diffeomorphic to the levelset $h^{-1}(c)$ of some smooth function $h:\mathbb{S}^5\to [0,1]$, having just two critical points \cite[Thm. 2]{ref:hirsch1965homotopy}. This means that the arguments above do not readily generalize beyond $\mathbb{R}^5$. 
    }
    \exampleEnd
\end{remark}

% As alluded to before, it is of interest to consider Lyapunov functions with a positive definite Hessian at the equilibrium instead of focussing on functions that are quadratic, polynomial or real-analytic functions. We call such a function a .  

% \begin{corollary}[Morse Lyapunov functions imply the generalized smooth Poincar\'e conjecture]
% \label{cor:Morse}
%     Let $f\in \mathcal{S}^r_0(\mathbb{R}^5)$, for some $r\in \mathbb{N}_{\geq 1}\cup \{\infty\}$. If the fact that $0$ is GAS under $f$ can always be asserted by a Morse Lyapunov function (\textit{e.g.}, a globally P{\L} function), then, the validity of the generalized $4$-dimensional smooth Poincar\'e conjecture is implied. 
% \end{corollary}
% \begin{proof}
%     By \cite[Thm. 3.3]{ref:boumal2026smooth}, for smooth Lyapunov functions with a positive definite Hessian at $x^*=0$, it is known that there is a diffeomorphism $\varphi$ such that $V(\varphi^{-1}(y))=\|y\|_2^2$. Indeed, functions that are globally P{\L} are a subclass, \textit{e.g.}, see \cite[Lem. 2.2]{ref:boumal2026smooth}. 
% \end{proof}

Although we believe that elucidating the link between the structure of Lyapunov functions and the topology of $\mathcal{S}^r_0(\mathbb{R}^n)$ is of independent interest, one may prove Proposition~\ref{prop:1} $(ii)$ directly via the aforementioned result of Hirsch \cite[Thm. 2]{ref:hirsch1965homotopy} and the techniques to prove \cite[Thm. 3.1]{ref:milnor1963morse}.

\section{Existence results}
Inspired by genericity results available for Morse functions (\textit{e.g.}, see \cite{ref:peixoto1967approximation}, \cite[p. 147]{ref:hirsch1976differential} or \cite[p. 153]{palis1982geometric}), one might expect an affirmative answer to our question. 

 To add, following~\cite{ref:kvalheim2025differential}, let $\mathcal{L}^r_0(\mathbb{R}^n)$ be the set of surjective, proper $C^r$-smooth functions $L:\mathbb{R}^n\to \mathbb{R}_{\geq 0}$ with a unique minimum and critical point at $0$ such that $L(0)=0$. This set may be understood as a set of Lyapunov functions. Now it turns out that path-connectedness of merely $\mathcal{L}^r_0(\mathbb{R}^5)$, for some $r\in \mathbb{N}_{\geq 1}\cup \{\infty\}$, also implies the generalized $4$-dimensional smooth Poincar\'e conjecture \cite[Prop. 2]{ref:kvalheim2025differential}. Hence, suppose that $f$ is of the form $f=-\nabla L$, for $L\in \mathcal{L}_0^{\infty}(\mathbb{R}^5)$. If any such a $f$ comes equipped with a Morse Lyapunov function (\textit{e.g.}, via a perturbation), then the generalized $4$-dimensional smooth Poincar\'e conjecture is true (by Proposition~\ref{prop:1}). Earlier work by Smale points in this direction (for closed manifolds) \cite{ref:smale1961gradientDS}.

Towards addressing our question, and in line with aforementioned genericity results, we start by providing a converse result. To that end, let $\mathcal{HS}_0^r(\mathbb{R}^n)\subset \mathcal{S}_0^r(\mathbb{R}^n)$ be the subset of vector fields such that $0\in \mathbb{R}^n$ is a hyperbolic equilibrium point.  

\begin{theorem}[On the existence of Morse Lyapunov functions]
\label{thm:Morse:exists:hyperbolic}
  Let $f\in \mathcal{HS}^r_0(\mathbb{R}^n)$, for some $r\in \mathbb{N}_{\geq 1}\cup \{\infty\}$ and $n\in \mathbb{N}_{\geq 1}$, then, there is a smooth Morse Lyapunov function certifying that $0$ is GAS. 
\end{theorem}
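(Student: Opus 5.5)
Since $0$ is GAS, it is in particular asymptotically stable. A hyperbolic equilibrium that is asymptotically stable must have $A:=Df(0)$ Hurwitz: an eigenvalue with positive real part would give a nontrivial unstable manifold, and hyperbolicity excludes eigenvalues on the imaginary axis. The plan is to glue a local quadratic Lyapunov function near $0$ to a global smooth Lyapunov function away from $0$, so that the result still decreases strictly along $f$ and has a positive definite Hessian at $0$.

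\textbf{Step 1 (two ingredients).} Take a global smooth Lyapunov function $W$ from the converse theorem recalled in the setting (\cite[Thm~2.4]{ref:bacciotti2005liapunov}). Take $P\succ 0$ solving $A^\top P+PA=-I_n$ and set $Q(x):=\langle x,Px\rangle$. Since $f(x)=Ax+o(\|x\|)$, we get $\langle\nabla Q(x),f(x)\rangle=-\|x\|^2+o(\|x\|^2)<0$ on some punctured ball $B_\rho\setminus\{0\}$. Note that $\nabla^2 W(0)$ may be degenerate, so $W$ alone will not do.

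\textbf{Step 2 (reparametrize $W$ so the two functions are comparable).} Choose $c>0$ so small that the sublevel set $\{Q\le c\}$ lies in $B_\rho$. This set is forward invariant, since $Q$ decreases there. On the compact annulus $K:=\{c/4\le Q\le c\}$ the function $W$ is positive and bounded. Replace $W$ by $\tilde W:=\alpha\circ W$, with $\alpha:\mathbb{R}_{\ge0}\to\mathbb{R}_{\ge0}$ smooth, $\alpha(0)=0$ and $\alpha'>0$; then $\tilde W$ is still a Lyapunov function. The aim is $\tilde W<Q$ on $\{Q\le c/4\}$ and $\tilde W>Q$ near $\{Q=c\}$.

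This step is the main obstacle. Since $W$ and $Q$ have different level sets, no pointwise ordering of the two is automatic. One clean fix is to avoid gluing function values altogether. Instead, build the function from the level sets, in the spirit of Wilson \cite{ref:wilson1967structure}. Let $\Sigma:=\{Q=c\}$; it is a smooth sphere transverse to $f$. Every $x\ne0$ crosses $\Sigma$ at a unique time $\tau(x)$, which is smooth by the implicit function theorem and transversality. Define
\[
V(x):=c\,e^{-\lambda\tau(x)}\ \text{ for } Q(x)\ge c \ \text{ in a first version, and } V(x):=Q(x) \text{ inside}.
\]
The smoothness of this splice across $\Sigma$ is exactly the difficulty. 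So I would rather use a partition-of-unity blend.

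\textbf{Step 3 (blend).} Let $\beta$ be a smooth cutoff, depending only on $Q$, with $\beta\equiv1$ on $\{Q\le c/4\}$ and $\beta\equiv0$ on $\{Q\ge c\}$, and $\beta'(Q)\le 0$. Define $V:=\beta Q+(1-\beta)\tilde W$. Then
\[
\langle\nabla V,f\rangle=\beta\langle\nabla Q,f\rangle+(1-\beta)\langle\nabla\tilde W,f\rangle+(Q-\tilde W)\,\beta'(Q)\langle\nabla Q,f\rangle .
\]
On $K$ the first two terms are negative. The last term is $\le0$ precisely when $Q\ge\tilde W$ there, since $\beta'\le0$ and $\langle\nabla Q,f\rangle<0$. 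So it suffices to arrange $\tilde W\le Q$ on $K$. This is easy: pick $\alpha$ with $\alpha(s)\le \varepsilon s$ for $s\le \max_K W$, where $\varepsilon\max_K W\le c/4\le\min_K Q$. Beyond that range, let $\alpha$ grow arbitrarily, keeping $\alpha'>0$, so that coercivity is preserved. With this choice the earlier requirement $\tilde W>Q$ near $\Sigma$ is not needed.

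\textbf{Step 4 (conclusion).} Then $V$ is smooth and coercive, since $V=\tilde W$ outside $\{Q\le c\}$. It vanishes only at $0$, decreases strictly along $f$ away from $0$, and equals $Q$ near $0$. Hence $\nabla^2V(0)=2P\succ0$, and $V$ is a Morse Lyapunov function.
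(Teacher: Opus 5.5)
Your Step~3 has a sign error, and the construction fails as written. On $K$ you have $\beta'\le 0$ and $\langle\nabla Q,f\rangle<0$, so $\beta'(Q)\langle\nabla Q,f\rangle\ge 0$. The cross term $(Q-\tilde W)\,\beta'(Q)\langle\nabla Q,f\rangle$ is therefore $\le 0$ exactly when $\tilde W\ge Q$ on $K$, which is the opposite of what you arrange.

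Your choice $\alpha(s)\le\varepsilon s$ with $\varepsilon\max_K W\le c/4$ then provably destroys strict decrease. Take $x$ with $Q(x)=c$. Its trajectory stays in $\{Q\le c\}$ and meets $\{Q=c/4\}$ at some $y=\varphi^{t_1}(x)$ with $t_1>0$. Since $\beta(x)=0$ and $\beta(y)=1$,
\[
V(x)=\tilde W(x)\le\varepsilon\max_K W\le c/4=Q(y)=V(y),
\]
whereas a strict Lyapunov function needs $V(y)<V(x)$. Hence $\langle\nabla V,f\rangle\ge 0$ somewhere on that arc.

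The underlying point is that moving inward shifts weight from the outer function to the inner one. That lowers $V$ only if the inner function lies \emph{below} the outer one on the transition annulus. Your Step~2 aim ``$\tilde W>Q$ near $\Sigma$'' was the right instinct, and Step~3 discards it.

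The repair is one line: arrange $\tilde W\ge Q$ on $K$. For example, take $\tilde W:=LW$ with $L\ge c/\min_K W$; note $\min_K W>0$ because $K$ is compact and avoids $0$. Alternatively, keep $W$ and replace $Q$ by $\varepsilon Q$ with $\varepsilon\le\min_K W/c$. Then the cross term is $\le 0$, and the first two terms form a convex combination of negative quantities. Your Step~4 goes through with $\nabla^2V(0)=2P$ (resp.\ $2\varepsilon P$).

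Corrected this way, your argument is essentially the paper's. The paper takes $\varepsilon=\alpha_1/(2\alpha_2)$ precisely so that $\varepsilon V_q<V_\infty$ on the annulus. The only cosmetic difference is that its cutoff is built from the level sets of $V_\infty$ rather than of $Q$. The abandoned level-set construction in Step~2 plays no role and should be removed.
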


\begin{proof}
    First, by the GAS property, there is a smooth (\textit{i.e.}, $C^{\infty}$-smooth) Lyapunov function $V_{\infty}: \mathbb{R}^n\to \mathbb{R}_{\geq 0}$, \textit{e.g.}, see \cite[Thm~2.4]{ref:bacciotti2005liapunov}. This function is not known to be Morse. 

    Second, by hyperbolicity, there is some open neighbourhood $B_0$ of $0$ and a quadratic Lyapunov function $V_q:B_0\to \mathbb{R}_{\geq 0}$, such that $\langle \nabla V_q(x),f(x)\rangle < 0$ on $B_0\setminus \{0\}$, \textit{e.g.}, see~\cite[\S 28]{hahn1967stability}. This means in particular that $\nabla^2 V_q(0)\succ 0$. Note, $V_q$ need not be a globally valid Lyapunov function.

    Now we combine $V_q$ and $V_{\infty}$ to create a global, smooth Lyapunov function that is Morse. Pick $c>0$ such that $M_2:=V_{\infty}^{-1}([0,c])\subset B_0$ and set $M_1:=V_{\infty}^{-1}([0,c/2])$. Then, define $V_M:\mathbb{R}^n\to \mathbb{R}_{\geq 0}$ through
\begin{equation*}
    V_M(x) := \begin{cases}
        \varepsilon V_q(x) \quad &x\in \mathrm{int}(M_1)\\
        (1-\beta(x))\varepsilon V_q(x) + \beta(x) V_{\infty}(x)\quad &x\in \mathrm{cl}(M_2\setminus M_1)\\
        V_{\infty}(x) \quad &x\in \mathbb{R}^n\setminus M_2
    \end{cases},
\end{equation*}
for some smooth $\beta:\mathrm{cl}(M_2\setminus M_1)\to [0,1]$ and $\varepsilon>0$. We start by constructing an appropriate $\beta$. Define the function $\psi:\mathrm{cl}(M_2\setminus M_1)\to [0,1]$ through 
\begin{equation*}
     \psi(x) := \frac{V_{\infty}(x)-c/2}{c-c/2},
\end{equation*}
then consider a standard smooth bump function $b:[0,1]\to [0,1]$ defined as 
\begin{equation*}
    b(s) := \frac{\phi(s)}{\phi(s)+\phi(1-s)}
\end{equation*}
for $\phi:[0,1]\to[0,1]$ being a smooth function defined by
\begin{align*}
    \phi(s) := \begin{cases}
        e^{-\frac{1}{s}},\quad &s>0\\
        0 \quad &s=0
    \end{cases}.
\end{align*}
Now set $\beta := b\circ \psi$. 
Note, we employed standard bump functions here, that is, $b$ is smooth and such that $b(0)=0$, $b(1)=1$ with $b^{(k)}(0)=b^{(k)}(1)=0$ for all $k>0$, \textit{e.g.}, see \cite[Lem. 2.20]{Lee2}. Useful for the analysis below is that $b^{(1)}(s)>0$ for all $s\in (0,1)$.

Next, define
\begin{equation*}
    \alpha_1:= \min_{z\in \mathrm{cl}({M_2\setminus M_1})} V_{\infty}(z),\quad \alpha_2:= \max_{z\in \mathrm{cl}({M_2\setminus M_1})} V_{q}(z) 
\end{equation*}
and set $\varepsilon:=\alpha_1/(2\alpha_2)$. 

Now we verify that $V_M$ is a valid, smooth Morse Lyapunov function. 

\begin{enumerate}[(i)]
    \item $V_M^{-1}(0)=0$ as $\varepsilon>0$ and both $V_q$ and $V_{\infty}$ are positive definite and only $0$ at $0$.
    \item $\langle \nabla V_M(x),f(x) \rangle < 0$ for all $x\in \mathbb{R}^n\setminus \{0\}$ due to the following set of arguments. The only non-trivial domain is $\mathrm{cl}(M_2\setminus M_1)$. Then, as $V_{\infty}$ and $V_q$ are both Lyapunov functions for $f$, see that 
    \begin{align*}
        \langle (1-\beta(x))\varepsilon \nabla V_q(x) + \beta(x) \nabla V_{\infty}(x), f(x) \rangle < 0 
    \end{align*}
    on $\mathrm{cl}(M_2\setminus M_1)$. We are left with the term
    \begin{align*}
        \langle(V_{\infty}(x)-\varepsilon V_q(x))\nabla \beta(x),f(x) \rangle.
    \end{align*}
    Note that by our choice of $\varepsilon$, we have that $(V_{\infty}(x)-\varepsilon V_q(x))> 0$ on $\mathrm{cl}(M_2\setminus M_1)$. By our construction of $\beta$, we have that $\nabla \beta(x)$ vanishes only on the boundary of $\mathrm{cl}(M_2\setminus M_1)$. Moreover, on the interior of $\mathrm{cl}(M_2\setminus M_1)$, we have that $\nabla \beta(x) = \gamma(x) \nabla V_{\infty}(x)$ for some smooth strictly positive function $\gamma$. Putting it all together, we find that $ \langle(V_{\infty}(x)-\varepsilon V_q(x))\nabla \beta(x),f(x) \rangle\leq 0$ on $\mathrm{cl}(M_2\setminus M_1)$, hence, $\langle \nabla V_M(x),f(x) \rangle < 0$ on $\mathrm{cl}(M_2\setminus M_1)$. This line of arguments also shows that $V_M$ is smooth. 
    
    \item At last, $V_M$ is Morse as the only critical point of $V_M$ is $0$ and $\nabla^2 V_M(0)=\varepsilon \nabla^2 V_q(0)\succ 0$.
\end{enumerate}
\end{proof}

Observe that the proof of Theorem~\ref{thm:Morse:exists:hyperbolic} already shows that, in the context of $\mathcal{HS}^r_0(\mathbb{R}^n)$, given a Lyapunov function $V$ and some $\varepsilon>0$, one can always find a Morse Lyapunov $V_{M,\varepsilon}$ that is $\varepsilon$-close to $V$, in the $C^1$-topology.

We remark that an alternative proof of Theorem~\ref{thm:Morse:exists:hyperbolic} does not directly follow from employing \cite[Thm. 2.3]{ref:lan2013linearization}, as it is unclear if their change of coordinates can be constructed to be smooth (they work with $C^1$-diffeomorphisms). 

\begin{remark}[Non-trivial basins and smooth manifolds]
\label{rem:basins}
\begin{upshape}
    Let $M$ be a smooth $n$-dimensional manifold and let $p^*\in M$ be asymptotically stable under some smooth vector field $F$. Let $B^*$ be the corresponding basin of attraction. In this case, $B^*$ is diffeomorphic to $\mathbb{R}^n$~\cite[Thm. 2.2]{ref:wilson1967structure}. Hence, if this $p^*$ is some hyperbolic equilibrium point under $F$, then by Theorem~\ref{thm:Morse:exists:hyperbolic}, its stability can be certified via a Morse Lyapunov function. Note, this works as those properties (being hyperbolic, being Morse) are coordinate invariant. 
\end{upshape}
\exampleEnd
\end{remark}

Now, by combining Proposition~\ref{prop:1} and Theorem~\ref{thm:Morse:exists:hyperbolic}, one obtains an alternative proof for some results contained in \cite[App. A]{ref:kvalheim2025differential}.

\begin{corollary}[$\mathcal{HS}_0^r(\mathbb{R}^n)$ is path-connected]
\label{cor:HS:path:connected}
The space $\mathcal{HS}^r_0(\mathbb{R}^n)$, for any $r\in \mathbb{N}_{\geq 1}\cup \{\infty\}$ and $n\in \mathbb{N}_{\geq 1}$, is path-connected. 
\end{corollary}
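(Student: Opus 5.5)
We plan to follow the strategy of Proposition~\ref{prop:1}~$(i)$, but we must keep every vector field along the constructed paths inside $\mathcal{HS}^r_0(\mathbb{R}^n)$, not merely in $\mathcal{S}^r_0(\mathbb{R}^n)$. It suffices to connect an arbitrary $f\in\mathcal{HS}^r_0(\mathbb{R}^n)$ to the canonical field $f_c(y)=-y$, which is itself hyperbolic. Transitivity then gives path-connectedness.

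First, we invoke Theorem~\ref{thm:Morse:exists:hyperbolic} to obtain a smooth Morse Lyapunov function $V_M$ for $f$. By \cite[Thm.~3.3]{ref:boumal2026smooth}, $V_M\in\mathscr{V}$, so there is $\varphi\in\mathrm{Diff}(\mathbb{R}^n)$ with $V_M\circ\varphi^{-1}(y)=\|y\|_2^2$. Composing with a reflection if needed, we take $\varphi$ orientation preserving and fixing $0$. As in the proof of Proposition~\ref{prop:1}, we choose an isotopy $\psi_s$ from $\mathrm{id}$ to $\varphi$ with $\psi_s(0)=0$ for all $s$, and we set $f_s:=(\psi_s)_*f$.

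Next, we check hyperbolicity along this first path. Since $f(0)=0$ and $\psi_s(0)=0$, we have $Df_s(0)=D\psi_s(0)\,Df(0)\,D\psi_s(0)^{-1}$. This matrix is similar to $Df(0)$, so it has the same eigenvalues and $0$ stays hyperbolic. Global asymptotic stability is preserved by the change of coordinates, so $f_s\in\mathcal{HS}^r_0(\mathbb{R}^n)$ for all $s\in[0,1]$.

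The second leg is the straight-line homotopy $g_\tau:=(1-\tau)f_1-\tau y$ from $f_1=\varphi_*f$ to $f_c$. Global asymptotic stability holds throughout, via $V_1(y)=\|y\|_2^2$, exactly as in Proposition~\ref{prop:1}. The main obstacle is hyperbolicity of $Dg_\tau(0)=(1-\tau)A-\tau I$ with $A:=Df_1(0)$. We plan to get it from the Lyapunov inequality, by showing that $A+A^\top$ is negative definite, whence every matrix $(1-\tau)A-\tau I$ has strictly negative symmetric part and is therefore Hurwitz. However, the inequality $\langle y,f_1(y)\rangle<0$ for $y\neq0$ only yields $A+A^\top\preceq 0$ at first order, not strict definiteness, so this must be arranged.

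To secure strict definiteness we will revisit the construction in Theorem~\ref{thm:Morse:exists:hyperbolic}. Near $0$ one has $V_M=\varepsilon V_q$ with $V_q(x)=x^\top Px$, where $P\succ0$ solves $Df(0)^\top P+PDf(0)\prec 0$; this is possible because $0$ is GAS and hyperbolic, so $Df(0)$ is Hurwitz. Then $\nabla^2V_M(0)=2\varepsilon P$, and the Morse lemma normalization forces $D\varphi(0)^\top D\varphi(0)=\varepsilon P$. Hence $A=D\varphi(0)Df(0)D\varphi(0)^{-1}$ satisfies
\begin{equation*}
A+A^\top = D\varphi(0)^{-\top}\big(\varepsilon P\,Df(0)+\varepsilon\, Df(0)^\top P\big)D\varphi(0)^{-1}\prec 0 .
\end{equation*}
Therefore $Dg_\tau(0)$ has negative definite symmetric part for every $\tau\in[0,1]$, so $g_\tau\in\mathcal{HS}^r_0(\mathbb{R}^n)$ along the whole second leg. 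Concatenating the two paths connects $f$ to $f_c$ inside $\mathcal{HS}^r_0(\mathbb{R}^n)$, which completes the argument.
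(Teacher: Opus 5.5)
Your proof is correct. The first leg is the paper's: the isotopy $\psi_s$ fixes $0$, and hyperbolicity is preserved because $Df_s(0)$ is similar to $Df(0)$.

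The routes differ only in the second leg. You treat hyperbolicity of $(1-\tau)A-\tau I$ as the main obstacle and resolve it by proving $A+A^\top\prec 0$. That forces you back into the construction of Theorem~\ref{thm:Morse:exists:hyperbolic}: you choose $V_q(x)=x^\top Px$ with $P$ a strict Lyapunov solution, so that $V_M=\varepsilon V_q$ near $0$. You then use the normalization $D\varphi(0)^\top D\varphi(0)=\varepsilon P$, and the congruence computation checks out.

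The paper notes there is no obstacle. $A=Df_1(0)$ is similar to $Df(0)$, so it is Hurwitz, since $0$ is hyperbolic and asymptotically stable. Because $-\tau I$ commutes with $(1-\tau)A$, the spectrum of $(1-\tau)A-\tau I$ is $\{(1-\tau)\lambda_i-\tau\}$. All of these have negative real part for every $\tau\in[0,1]$.

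What each route buys:
\begin{itemize}
\item The paper's argument is shorter. It works for any Morse Lyapunov function and any normalizing $\varphi$.
\item Yours proves something stronger: $\|y\|_2^2$ is a strict quadratic Lyapunov function for the linearization $Dg_\tau(0)$ along the whole second leg. The cost is that it depends on the specific $V_M$ from the theorem's proof, or at least on $\nabla^2V_M(0)$ satisfying a strict Lyapunov inequality for $Df(0)$.
\end{itemize}
A general Morse Lyapunov function need not satisfy that strict inequality. For example, $f(x)=\begin{pmatrix}-1&2\\0&-1\end{pmatrix}x-\|x\|_2^2\,x$ has $V(x)=\|x\|_2^2$ as a Morse Lyapunov function, yet $Df(0)+Df(0)^\top$ is singular.
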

\begin{proof}
    Following the proof of Proposition~\ref{prop:1}, as $f=f_0\in \mathcal{HS}_0^r(\mathbb{R}^n)$, we have that $f_s=(\psi_s)_* f\in \mathcal{HS}_0^r(\mathbb{R}^n)$ for all $s\in [0,1]$, as $\psi_s$ is a diffeomorphism (guaranteed to exist by Theorem~\ref{thm:Morse:exists:hyperbolic}). At last, since $f_1\in \mathcal{HS}_0^r(\mathbb{R}^n)$, $(1-\tau) Df_1(0)$ is Hurwitz for all $\tau\in [0,1)$, but then, as the path $[0,1]\ni \tau \mapsto (1-\tau) Df_1(0) - \tau I_n$ is Hurwitz, the straight-line homotopy from $f_1$ to $-\nabla V_1/2$ is contained in $\mathcal{HS}_0^r(\mathbb{R}^n)$ as well (\textit{i.e.}, recall that $\nabla V_1(y)/2=y$).
\end{proof}
Another corollary might be of interest with the learning (\textit{e.g.}, via Neural ODEs) of stability certificates in mind. Note, the statement is global. 
\begin{corollary}[Trivial Lyapunov functions]
\label{cor:quad:Lyap}
    Let $f\in \mathcal{HS}^r_0(\mathbb{R}^n)$, for some $r\in \mathbb{N}_{\geq 1}\cup \{\infty\}$ and $n\in \mathbb{N}_{\geq 1}$, then, there is a diffeomorphism $\varphi$ such that 0 being GAS under $\varphi_*f$ can be certified through the quadratic Lyapunov function $y\mapsto \| y\|_2^2$.   
\end{corollary}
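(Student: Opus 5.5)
The plan is to combine Theorem~\ref{thm:Morse:exists:hyperbolic} with the global normal form result \cite[Thm. 3.3]{ref:boumal2026smooth}, as already used in the proof of Proposition~\ref{prop:1}.

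First, since $f\in \mathcal{HS}^r_0(\mathbb{R}^n)$, Theorem~\ref{thm:Morse:exists:hyperbolic} provides a smooth Morse Lyapunov function $V_M$. It is coercive, satisfies $V_M^{-1}(0)=\{0\}$, and has $0$ as its unique critical point with $\nabla^2 V_M(0)\succ 0$.

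Second, I would invoke \cite[Thm. 3.3]{ref:boumal2026smooth} to conclude that $V_M\in\mathscr{V}$. The hypotheses to check are smoothness, properness (coercivity), a unique critical point, and nondegeneracy of the Hessian there, and all of these hold. The theorem then gives $\varphi\in\mathrm{Diff}(\mathbb{R}^n)$ with $V_M(\varphi^{-1}(y))=\|y\|_2^2$, using $\inf V_M=0$. This is the only step requiring care: one must verify that the hypotheses of the cited theorem match the properties of $V_M$. Morally, the flow of $-\nabla V_M/\|\nabla V_M\|^2$ away from $0$ identifies each sublevel set $\{V_M\le c\}$ with a disc. Near $0$, the Morse lemma supplies the quadratic model; its chart is only local, and the cited theorem handles the globalization and smoothness at the origin. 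If needed, $\varphi$ can be normalized so that $\varphi(0)=0$.

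Third, I would verify the Lyapunov property in the new coordinates. Put $g:=\varphi_*f$, so that $g(y)=D\varphi(\varphi^{-1}(y))f(\varphi^{-1}(y))$, and set $W(y):=\|y\|_2^2=V_M(\varphi^{-1}(y))$. The chain rule gives
\begin{equation*}
\langle \nabla W(y), g(y)\rangle = \langle \nabla V_M(\varphi^{-1}(y)), f(\varphi^{-1}(y))\rangle,
\end{equation*}
because $D(\varphi^{-1})(y)\,D\varphi(\varphi^{-1}(y))=I_n$. The right-hand side is strictly negative for $y\neq 0$, since $\varphi^{-1}(y)\neq 0$.

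Finally, $W$ is coercive, vanishes only at $0$, and is strictly decreasing along $g$ away from $0$. Hence $W$ certifies that $0$ is GAS under $\varphi_*f$. Moreover, $g\in\mathcal{HS}^r_0(\mathbb{R}^n)$, because conjugation preserves hyperbolicity.
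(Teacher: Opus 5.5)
Your proposal is correct and follows the paper's argument. Theorem~\ref{thm:Morse:exists:hyperbolic} supplies a Morse Lyapunov function, \cite[Thm. 3.3]{ref:boumal2026smooth} places it in $\mathscr{V}$, and the pushforward chain-rule computation you write out is exactly what the paper leaves implicit. The normalization $\varphi(0)=0$ needs no adjustment: $V_M(\varphi^{-1}(0))=0$ together with $V_M^{-1}(0)=\{0\}$ already forces $\varphi^{-1}(0)=0$.
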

\begin{proof}
    As $f\in \mathcal{HS}^r_0(\mathbb{R}^n)$, there is a Morse Lyapunov function $V$ to certify that $0$ is GAS, but then there is diffeomorphism $\varphi$ such that $V(\varphi^{-1}(y))=\|y\|_2^2$. 
\end{proof}
Note, Corollary~\ref{cor:quad:Lyap} is not stating that the vector field itself becomes trivial \textit{cf.}~\cite[p. 156]{ref:Robinson}. It is interesting to remark that even without hyperbolicity, there are global coordinate transformations that do turn the flow under $f$ into the trivial flow (\textit{i.e.}, as generated by $\dot{x}=-x$), however, those transformations are generally not diffeomorphisms \cite{ref:kvalheim2025global}.

\noindent With Proposition~\ref{prop:1} and Theorem~\ref{thm:Morse:exists:hyperbolic} in mind, the remaining question becomes:
\\\\
\textbf{Question:}``\textit{Can Lyapunov functions (with respect to some non-hyperbolic equilibrium point $x^*$ being GAS on $\mathbb{R}^n$) always be chosen to be Morse?}"
\\\\
The case of $n=5$ is of special interest. The following proposition answers this question negatively, via an explicit counterexample. This also means that one cannot prove SPC4 via Proposition~\ref{prop:1} $(ii)$. Similarly, this means that path-connectedness of $\mathcal{S}^r_0(\mathbb{R}^n)$ is weaker than always having access to Morse Lyapunov functions (as $\mathcal{S}^r_0(\mathbb{R}^n)$ is already known to be path-connected for $n\neq 4,5$ \cite[Cor. 7]{ref:kvalheim2025differential}).  

\begin{figure}
    \centering
    \includegraphics[width=0.5\linewidth]{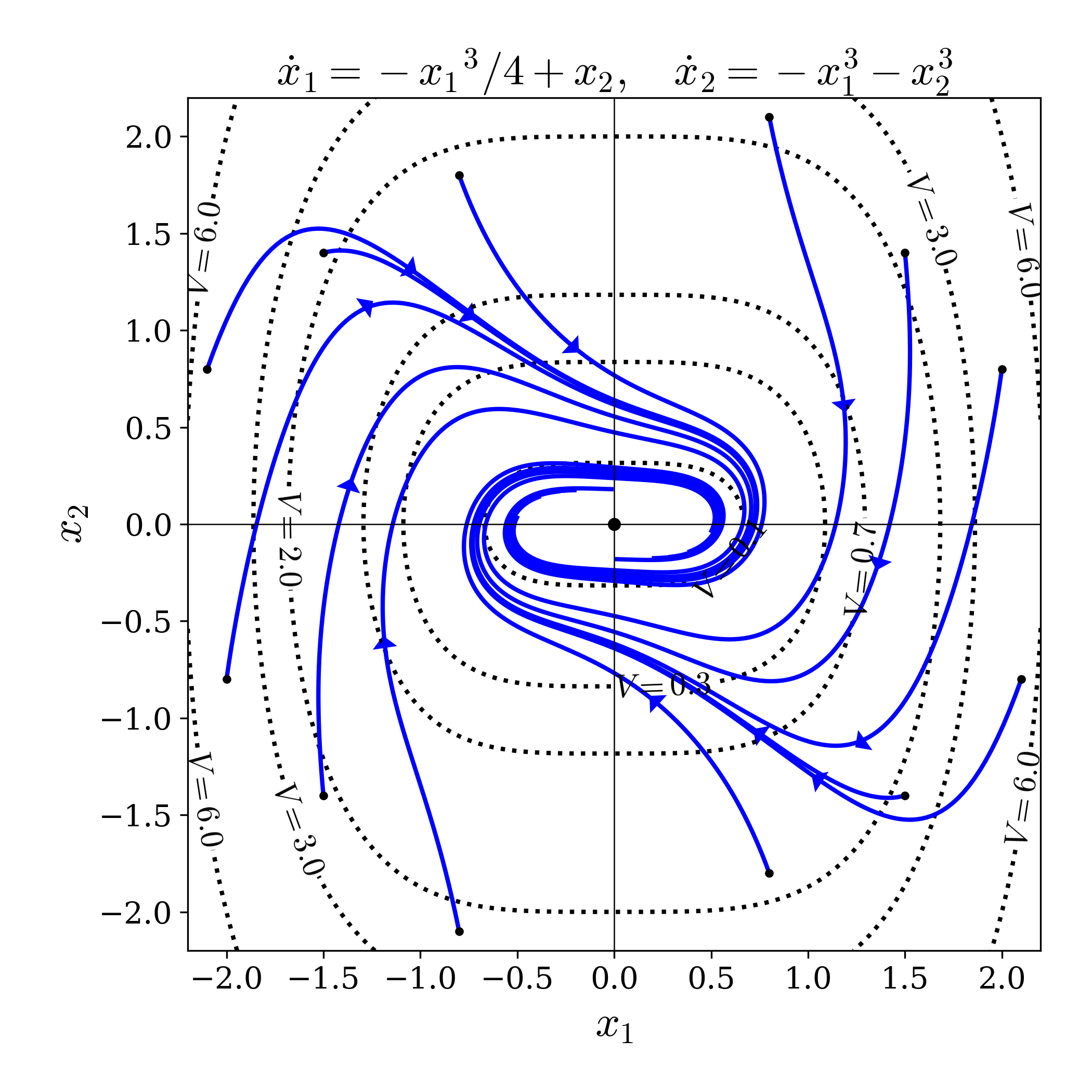}
    \caption{Phase portrait of~\eqref{equ:planar:counter} for $\varepsilon=1/4$. Figure made using Python.}
    \label{fig:placeholder}
\end{figure}

\begin{proposition}[Morse Lyapunov functions do not always exist]
\label{prop:counter}
    For any $n\geq 2$ and $r\in \mathbb{N}_{\geq 1}\cup \{+\infty\}$ there exists a $f\in \mathcal{S}^r_0(\mathbb{R}^n)\setminus \mathcal{HS}_0^r(\mathbb{R}^n)$ such that $0$ being GAS cannot be asserted via a Morse Lyapunov function. 
\end{proposition}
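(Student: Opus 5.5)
The plan is to exploit the fact that a Morse Lyapunov function is, to leading order near $0$, a positive definite quadratic form. Such a form must then be a weak Lyapunov function for the linearization $A:=Df(0)$. So it suffices to find a GAS vector field whose linearization admits \emph{no} positive definite $H$ with $HA+A^\top H\preceq 0$. A nilpotent Jordan block has this property.

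\emph{Step 1 (the example).} For $n=2$ take the polynomial (hence $C^\infty$, so $C^r$ for every $r$) vector field
\begin{equation*}
f(x_1,x_2):=\big(x_2,\,-x_1^3-x_2^3\big).
\end{equation*}
For $n\geq 3$ append the decoupled coordinates $\dot x_i=-x_i^3$, $i=3,\dots,n$. Then $Df(0)=A$ with $A e_2=e_1$ and all other entries zero. All eigenvalues of $A$ vanish, so $0$ is not hyperbolic and $f\notin\mathcal{HS}^r_0(\mathbb{R}^n)$.

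To show $0$ is GAS, use $E(x):=x_1^4/4+x_2^2/2+\sum_{i\geq 3}x_i^2/2$. This function is proper and positive definite, and $\dot E=-x_2^4-\sum_{i\geq 3}x_i^4\leq 0$. Hence $0$ is Lyapunov stable and all solutions are bounded and exist forward in time. By LaSalle's invariance principle, solutions converge to the largest invariant set in $\{x_2=0,\ x_i=0 \text{ for } i\geq 3\}$. On that set $\dot x_2=-x_1^3$ forces $x_1=0$, so the set is $\{0\}$ and $0$ is GAS. Therefore $f\in\mathcal{S}^r_0(\mathbb{R}^n)$.

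\emph{Step 2 (the obstruction).} Suppose $V$ is a smooth Lyapunov function with $H:=\nabla^2V(0)\succ 0$. Since $V(0)=0$ and $\nabla V(0)=0$, Taylor's theorem gives $\nabla V(x)=Hx+O(\|x\|^2)$, while $f(x)=Ax+O(\|x\|^2)$. Fix $u\in\mathbb{R}^n$. Then
\begin{equation*}
\langle\nabla V(tu),f(tu)\rangle=t^2\,u^\top HAu+O(t^3),\qquad t\to 0.
\end{equation*}
It therefore suffices to exhibit $u$ with $u^\top HAu>0$, since then the Lie derivative is positive for small $t>0$, a contradiction.

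Take $u=(u_1,u_2,0,\dots,0)$ and write $a:=H_{11}>0$ and $b:=H_{12}$. Because $Au=u_2e_1$,
\begin{equation*}
u^\top HAu=u_2\,(Hu)_1=a\,u_1u_2+b\,u_2^2.
\end{equation*}
Choosing $u_2=1$ and $u_1$ large with $a u_1>|b|$ makes this positive. Equivalently, the symmetric matrix $\bigl(\begin{smallmatrix}0&a\\a&2b\end{smallmatrix}\bigr)$ has determinant $-a^2<0$ and so is indefinite. Hence no Morse Lyapunov function exists.

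\emph{Expected main obstacle.} The only genuinely dynamical step is global asymptotic stability of the example. The paper's definition of a Lyapunov function requires a strict decrease, but GAS itself only needs the weak function $E$ together with LaSalle. A strict smooth Lyapunov function then exists by the converse theorem \cite[Thm~2.4]{ref:bacciotti2005liapunov}, and we never need to construct it.

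One point to check is that LaSalle applies: $E$ is proper, so forward orbits are precompact. The obstruction in Step 2 is purely local and uses only $C^2$ regularity of $V$.
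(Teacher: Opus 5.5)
Your proof is correct and is essentially the paper's argument: the same nilpotent-linearization example and the same Taylor-expansion obstruction via $u^\mathsf{T}HAu=a u_1u_2+b u_2^2$. The differences do not matter. You take $\varepsilon=0$ and use LaSalle, which the paper notes also works; the paper takes $\varepsilon>0$ so that $x_1^4/4+x_2^2/2$ is already a strict Lyapunov function. You pad with $\dot x_i=-x_i^3$ instead of $\dot z=-z$, which is harmless because the obstruction only uses vectors supported in the first two coordinates.
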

\begin{proof}
    Consider the planar dynamical system $\dot{x}=f(x)$ as given by 
    \begin{equation}
        \label{equ:planar:counter}
            \begin{aligned}
        \dot{x}_1 =& -\varepsilon x_1^3+x_2\\
        \dot{x}_2 =& -x_1^3 - x_2^3,
    \end{aligned}
    \end{equation}
    for some $\varepsilon> 0$. See that $0\in \mathbb{R}^2$ is GAS, as one may check via the smooth Lyapunov function $\mathbb{R}^2 \ni (x_1,x_2)\mapsto {V}(x_1,x_2)=:x_1^4/4+x_2^2/2$ (for $\varepsilon=0$, the same is true, but one employs the Krasovskii-LaSalle invariance principle, \textit{e.g.}, see~\cite[Thm. 5.24]{Sastry}), \textit{e.g.}, see Figure~\ref{fig:placeholder}. Observe that $A:=Df(0)$ is nilpotent and in particular non-hyperbolic. Moreover, $V$ is not a Morse function. 

Suppose now that one could find a Morse Lyapunov function $V_M$, let 
    \begin{align*}
        H = \begin{pmatrix}
a & b \\
b & c
        \end{pmatrix},
    \end{align*}
    denote $\nabla^2 V_M(0)$. It follows that $a,c>0$ and $ac-b^2>0$.  
    
Then see that for any $x\in \mathbb{R}^2$, one has $x^\mathsf{T} HA x =a x_1 x_2 + b x_2^2$. There is always a pair $(x_1,x_2)$ such that this product is strictly positive, for instance, fix $x_2=1$, then $x_1> |b|/a$ suffices.

Next, since both $f$ and $V_M$ are smooth, it follows that $f(x) = Ax + \mathrm{h.o.t.}$ ($\mathrm{h.o.t.}$ denotes \textit{higher order terms}) and $V_M(x) = \frac{1}{2}x^\mathsf{T} H x + \mathrm{h.o.t}$. In particular, $\langle \nabla V_M(x), f(x) \rangle = x^\mathsf{T} HA x + \mathrm{h.o.t}$. Now pick some $\widetilde{x}$ such that $\widetilde{x}^\mathsf{T} HA \widetilde{x}>0$ and see that 
\begin{align*}
    \frac{1}{\tau^2} \langle \nabla V_M(\tau \widetilde{x} ),f(\tau \widetilde{x} )\rangle = \widetilde{x}^\mathsf{T} HA \widetilde{x} + O(\tau)
\end{align*}
is positive for sufficiently small $\tau$. This means that $V_M$ was no Lyapunov function after all. Examples beyond $n=2$ are found by appending \eqref{equ:planar:counter} with $\dot{z}=-z$, for $z\in \mathbb{R}^{n-2}$. In that case the obstruction follows by considering states of the form $(\widetilde{x},0)$.  
\end{proof}

\begin{remark}[The case $n=1$]
\begin{upshape}
    When $f\in \mathcal{S}^r_0(\mathbb{R}^n)$,  for $n = 1$ and $r\in \mathbb{N}_{\geq 1}\cup \{+\infty\}$, the trivial (quadratic) Morse Lyapunov function, \textit{i.e.}, $x\mapsto V_M(x)=x^2$, always suffices as only signs matter, \textit{e.g.}, recall $\dot{x}=-x^3$. See also~\cite{ref:sontag1980remarks}. 
\end{upshape}
\exampleEnd
\end{remark}

Characterizing precisely when Morse Lyapunov functions exist remains an open problem. We provide a few pointers. First, beyond $n>1$, a lack of hyperbolicity is also no immediate obstruction.

\begin{figure}
    \centering
    \includegraphics[width=0.5\linewidth]{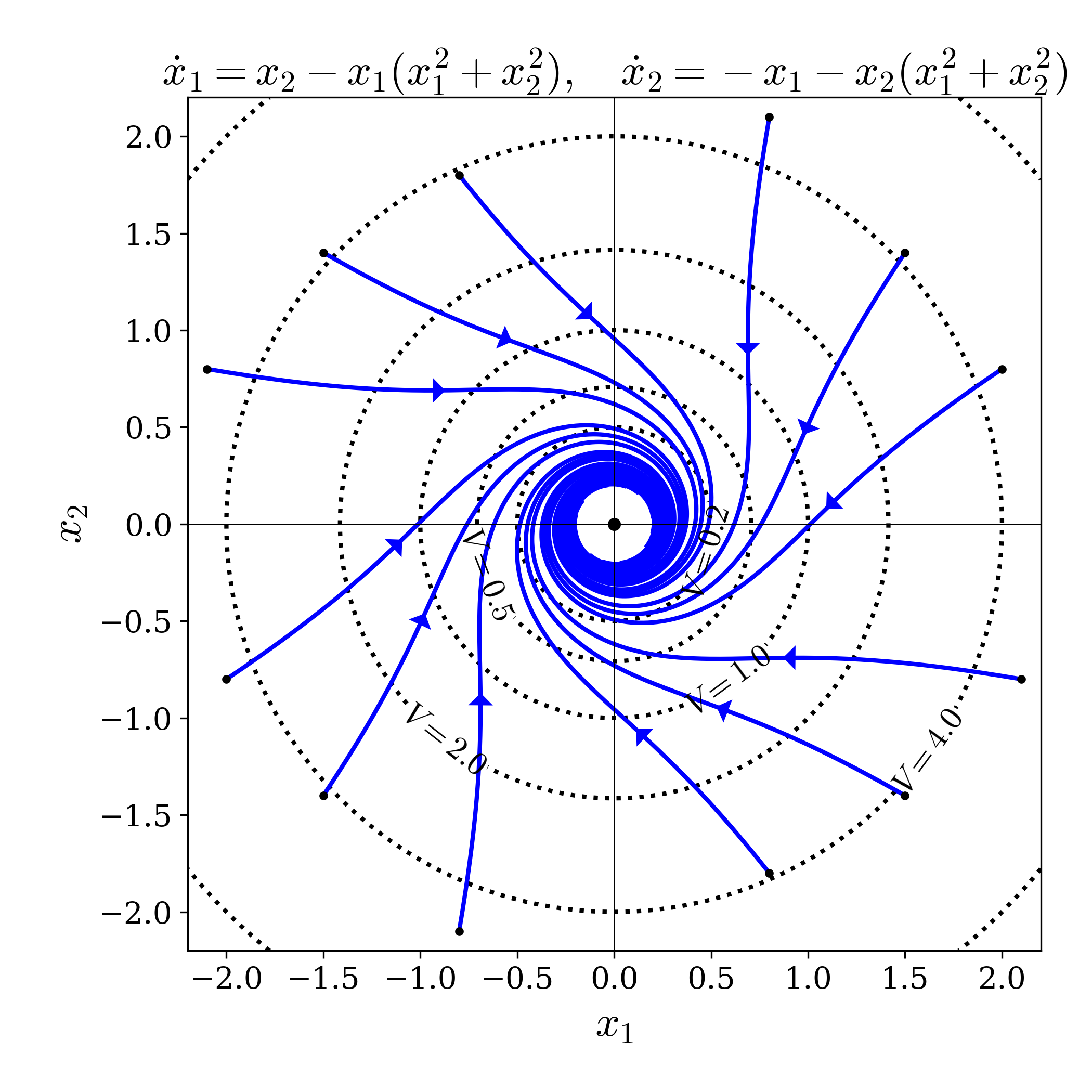}
    \caption{Phase portrait of~\eqref{equ:planar:nonhyperbolic:with:Morse}. Figure made using Python.}
    \label{fig:nonhyperbolic:Morse}
\end{figure}

\begin{example}[Non-hyperbolic planar system with a Morse Lyapunov function]
\label{ex:nonhyperbolic:but:Morse}
\begin{upshape}
         Consider the planar dynamical system $\dot{x}=f(x)$ as given by 
    \begin{equation}
        \label{equ:planar:nonhyperbolic:with:Morse}
            \begin{aligned}
        \dot{x}_1 =& x_2 - x_1(x_1^2+x_2^2),\\
        \dot{x}_2 =& -x_1-x_2(x_1^2+x_2^2),
    \end{aligned}
    \end{equation}
    this is a standard example in the context of center manifolds, \textit{e.g.}, see~\cite[\S 7.6]{Sastry}. Observe that $0\in \mathbb{R}^2$ is the unique equilibrium point and since $\mathrm{spec}(Df(0))=\{\pm i\}$, it is a non-hyperbolic equilibrium point. However, $(x_1,x_2)\mapsto V_M(x_1,x_2)=x_1^2+x_2^2$ checks out as a Morse Lyapunov function, see Figure~\ref{fig:nonhyperbolic:Morse}. 
\end{upshape}
    \exampleEnd
\end{example}

Towards a complete characterization of when Morse Lyapunov functions exist, we employ the following lemma. 
\begin{lemma}[Marginal stability and Lyapunov equations]
\label{lem:lyapunov:equ}
    For $A\in \mathbb{R}^{n\times n}$ there exists a $H\succ 0$ such that $HA+A^{\mathsf{T}}H\preceq 0$ if and only if $A$ has $\Re(\mathrm{spec}(A))\leq 0$ and all eigenvalues $\lambda_i$ of $A$, such that $\Re(\lambda_i)=0$, are semisimple. 
\end{lemma}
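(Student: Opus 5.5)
Both directions are linear algebra, so I would work directly with the real Jordan (or Schur) structure of $A$ and the quadratic form $V(x)=x^{\mathsf{T}}Hx$.

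\emph{Sufficiency.} Assume $\Re(\mathrm{spec}(A))\leq 0$ and that every eigenvalue on the imaginary axis is semisimple. I would choose a real invertible $T$ that block-diagonalizes $A$ as
\[
T^{-1}AT=\mathrm{diag}(A_s,A_c).
\]
Here $A_s$ collects the generalized eigenspaces with $\Re\lambda<0$, and $A_c$ collects those with $\Re\lambda=0$. Semisimplicity lets us take $A_c$ block-diagonal with blocks $0$ (for $\lambda=0$) and $\begin{pmatrix}0&\omega\\-\omega&0\end{pmatrix}$ (for $\lambda=\pm i\omega$), so $A_c$ is skew-symmetric. Since $A_s$ is Hurwitz, the standard Lyapunov theorem (cf. \cite[Thm.~18]{ref:sontag2013mathematical}) gives $P_s\succ 0$ with $P_sA_s+A_s^{\mathsf{T}}P_s=-I$. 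Set $P_c=I$, so that $P_cA_c+A_c^{\mathsf{T}}P_c=0$. Then $P=\mathrm{diag}(P_s,P_c)\succ0$ gives $P\tilde A+\tilde A^{\mathsf{T}}P\preceq 0$ for $\tilde A:=T^{-1}AT$. Finally $H:=T^{-\mathsf{T}}PT^{-1}\succ 0$ satisfies
\[
HA+A^{\mathsf{T}}H=T^{-\mathsf{T}}\bigl(P\tilde A+\tilde A^{\mathsf{T}}P\bigr)T^{-1}\preceq 0.
\]

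\emph{Necessity, eigenvalue bound.} Assume $H\succ0$ with $HA+A^{\mathsf{T}}H\preceq0$, and let $V(x)=x^{\mathsf{T}}Hx$. Along $\dot x=Ax$ we have $\frac{\mathrm d}{\mathrm dt}V(e^{At}x)\le 0$, so $V(e^{At}x)\le V(x)$ for all $t\ge0$. Equivalence of the norm $\sqrt{V}$ with $\|\cdot\|$ then gives $\sup_{t\geq 0}\|e^{At}\|<\infty$. For the bound on eigenvalues I would argue directly: let $Av=\lambda v$ with $v\in\mathbb C^n\setminus\{0\}$. Then
\[
v^*(HA+A^{\mathsf{T}}H)v=2\Re(\lambda)\,v^*Hv\leq 0,
\]
and since $v^*Hv>0$ this forces $\Re\lambda\le0$.

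\emph{Necessity, semisimplicity.} I expect this to be the main point needing care. Suppose $\Re\lambda=0$ and $\lambda$ is not semisimple. Then there is a Jordan chain $Av_1=\lambda v_1$, $Av_2=\lambda v_2+v_1$. For the real part of the complex solution $e^{At}v_2=e^{\lambda t}(v_2+tv_1)$, the norm grows linearly in $t$, because $|e^{\lambda t}|=1$. This contradicts the bound $\sup_{t\geq 0}\|e^{At}\|<\infty$. To sidestep the passage to real parts, I would prefer the complex computation. Extend $V$ to the Hermitian form $v^*Hv$; the same monotonicity holds for complex initial data, since $H$ is real symmetric. Then $(v_2+tv_1)^*H(v_2+tv_1)$ is bounded in $t$, which is impossible because $v_1^*Hv_1>0$ makes its leading term $t^2v_1^*Hv_1\to\infty$.
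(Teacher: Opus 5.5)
Your proposal is correct and follows essentially the paper's route. For sufficiency, both split $A$ into a Hurwitz block and a semisimple center block in real canonical form, using a Lyapunov solution on the former and the identity on the latter. For necessity, both use monotonicity of $x^{\mathsf{T}}Hx$ together with the unboundedness of a Jordan-block solution.

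Your version is simply more explicit: you write out the congruence $H=T^{-\mathsf{T}}PT^{-1}$ that the paper leaves implicit, you get $\Re\lambda\le 0$ from a direct eigenvector computation where the paper just says it follows from monotonicity, and you run the Jordan-chain contradiction with the Hermitian form.
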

\begin{proof}
    We believe this is a classical result (\textit{e.g.}, see \cite[Exercise 5.6(b)]{ref:Astrom2008feedback}), yet we provide a proof to be self-contained. 

    We start by showing that if $A$ satisfies the assumptions, then such a $H$ exists. Decompose $\mathbb{R}^n=E^s \oplus E^c$. On $E^s$ (the stable subspace), we have that $A^s:=A|_{E^s}$ is Hurwitz, and thus there is a matrix $H^s$ such that $H^s A^s + (A^s)^{\mathsf{T}}H^s \prec 0$. On the center subspace $E^c$ we know that the eigenvalues are semisimple, meaning that we can diagonalize $A^c := A|_{E^c}$, where the blocks are either $0$ or of the form 
\begin{align*}
    A^{\omega} = \begin{pmatrix}
        0 & \omega \\ - \omega & 0
    \end{pmatrix}.
\end{align*}
See that if we select $H^{\omega}=I_2$, then $H^{\omega} A^{\omega}+ (A^{\omega})^{\mathsf{T}}H^{\omega} = 0$. Hence, if we select $H:=\mathrm{diag}(H^s,I_p)$, for $p=\mathrm{dim}(E^c)$, then, the Lyapunov equation $HA+A^{\mathsf{T}}H\preceq 0$ is satisfied. 

Now, for the other direction, assume that $H$ exists. Let $V(x):= \langle Hx, x\rangle $ and consider $V$ as being a Lyapunov function for $\dot{x}=Ax$. As $\langle Hx, Ax \rangle \leq 0$ by assumption, $V$ is non-increasing along solutions under $A$. This directly implies that $\Re(\mathrm{spec}(A))\leq 0$. Now suppose that $A$ has some eigenvalue $\lambda_i$ such that $\Re(\lambda_i)=0$, yet $\lambda_i$ fails to be semisimple. This means that diagonalization is obstructed and we have a non-trivial Jordan block. This however means that the solution under $A$ is unbounded, hence, a contradiction.    
\end{proof}

Now, finally, we can provide a necessary condition for the existence of a smooth Morse Lyapunov function.

\begin{corollary}[Existence of Morse Lyapunov functions]
\label{cor:no:Morse:exists}
  Let $f\in \mathcal{S}^r_0(\mathbb{R}^n)$, for some $r\in \mathbb{N}_{\geq 1}\cup \{\infty\}$ and $n\in \mathbb{N}_{\geq 1}$. Then, there is a smooth Morse Lyapunov function certifying that $0$ is GAS only if the non-hyperbolic (center) part of $Df(0)$ can be diagonalized. 
\end{corollary}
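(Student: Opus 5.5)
The plan is to reduce the statement to Lemma~\ref{lem:lyapunov:equ} by linearizing the decay condition at the equilibrium, exactly as in the proof of Proposition~\ref{prop:counter}. Suppose $V_M$ is a smooth Morse Lyapunov function for $f$, and write $H:=\nabla^2 V_M(0)\succ 0$ and $A:=Df(0)$. Since $V_M(0)=0$ is the minimum, $\nabla V_M(0)=0$. Taylor expansion then gives $\nabla V_M(x)=Hx+o(\|x\|)$ and $f(x)=Ax+o(\|x\|)$. Here we only need $f\in C^1$, so $r\geq 1$ suffices, and $V_M\in C^2$. Hence
\begin{equation*}
\langle \nabla V_M(x),f(x)\rangle = x^{\mathsf{T}}HAx + o(\|x\|^2).
\end{equation*}

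Next we show $HA+A^{\mathsf{T}}H\preceq 0$. Suppose instead that some $\widetilde{x}$ has $\widetilde{x}^{\mathsf{T}}(HA+A^{\mathsf{T}}H)\widetilde{x}>0$, equivalently $\widetilde{x}^{\mathsf{T}}HA\widetilde{x}>0$. Evaluating at $\tau\widetilde{x}$ and dividing by $\tau^2$ gives
\begin{equation*}
\tau^{-2}\langle \nabla V_M(\tau\widetilde{x}),f(\tau\widetilde{x})\rangle = \widetilde{x}^{\mathsf{T}}HA\widetilde{x}+o(1),
\end{equation*}
which is strictly positive for small $\tau>0$. This contradicts property (iii) of a Lyapunov function. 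So $H\succ 0$ solves the non-strict Lyapunov inequality.

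Lemma~\ref{lem:lyapunov:equ} now yields $\Re(\mathrm{spec}(A))\leq 0$, and every eigenvalue on the imaginary axis is semisimple. This means the restriction $A|_{E^c}$ to the center subspace $E^c$ (the generalized eigenspace of the eigenvalues with zero real part) has only trivial Jordan blocks. Hence it is diagonalizable over $\mathbb{C}$, equivalently block-diagonalizable over $\mathbb{R}$ into zeros and $2\times 2$ rotation blocks $A^{\omega}$. This is the claimed condition on the non-hyperbolic part of $Df(0)$. The inequality $\Re\leq 0$ is also automatic from GAS, but it comes for free here.

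The main obstacle is the ``only if'' direction of Lemma~\ref{lem:lyapunov:equ}, which is sketched rather loosely in the paper. If I had to firm it up, I would argue directly as follows. Take a Jordan chain $Av_1=\lambda v_1$, $Av_2=\lambda v_2+v_1$ with $\Re\lambda=0$, and extend $H$ to a Hermitian form on $\mathbb{C}^n$. From $A^{*}H+HA\preceq 0$ and $v_1^{*}(A^{*}H+HA)v_1=2\Re(\lambda)\,v_1^{*}Hv_1=0$, the vector $v_1$ lies in the kernel of the negative semidefinite matrix $A^{*}H+HA$. Then $0=v_2^{*}(A^{*}H+HA)v_1=v_1^{*}Hv_1+2\Re(\lambda)\,v_2^{*}Hv_1=v_1^{*}Hv_1$, contradicting $H\succ 0$. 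Alternatively, I would use the boundedness of $e^{tA}$ implied by $\tfrac{d}{dt}\,x^{\mathsf{T}}Hx\leq 0$ and $H\succ 0$, together with the polynomial growth $t e^{\lambda t}$ that a nontrivial Jordan block would force.
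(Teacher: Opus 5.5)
Your proposal is correct and matches the paper's proof. The paper also argues by contraposition: the scaling argument from Proposition~\ref{prop:counter} forces $HA+A^{\mathsf{T}}H\preceq 0$ for $H=\nabla^2V_M(0)\succ 0$, and Lemma~\ref{lem:lyapunov:equ} then rules out non-semisimple imaginary-axis eigenvalues. Your $o(1)$ remainder, which is valid for merely $C^1$ $f$, and your Jordan-chain identity $v_2^{*}(A^{*}H+HA)v_1=v_1^{*}Hv_1$ tighten two steps the paper only sketches: its $O(\tau)$ term and the unboundedness argument in the lemma.
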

\begin{proof}
    For if not, $Df(0)$ must have an eigenvalue $\lambda_i$, with $\Re(\lambda_i)=0$, that is not semisimple. Then, by Lemma~\ref{lem:lyapunov:equ} and the proof strategy as in Proposition~\ref{prop:counter}, it follows that no Morse Lyapunov function can exist. 
\end{proof}

Indeed, the example contained in the proof of Proposition~\ref{prop:counter} is such that $Df(0)$ is nilpotent, whereas the system in Example~\ref{ex:nonhyperbolic:but:Morse} does meet the conditions from Corollary~\ref{cor:no:Morse:exists}. 

However, the vector fields from both \cite[Prop. 5.2 (b)]{ref:bacciotti2005liapunov} and \cite{ref:ahmadi2018globallyP} are such that $Df(0)=0\cdot I_2$, hence, they also meet the conditions from Corollary~\ref{cor:no:Morse:exists}. The condition is however only necessary and our theory does not allow for any conclusive statements. As one cannot readily combine Lemma~\ref{lem:lyapunov:equ} and the proof of Theorem~\ref{thm:Morse:exists:hyperbolic} (\textit{i.e.}, as the effect of higher order terms is unclear), we leave a complete characterization of Morse Lyapunov functions for future work. 

%%%%%%%%%%%%%%%%%%%%%%%%%%%%%%%%%%%%%%%%%%%%
\pagestyle{basicstyle}
\addcontentsline{toc}{section}{Bibliography}
\subsection*{Bibliography}
\printbibliography[heading=none]

%%%%%%%%%%%%%%%%%%%%%%%%%%%%%

\end{document}